\theoremstyle{plain}
\newtheorem{thm}{Theorem}[section]
\newtheorem{lem}[thm]{Lemma}
\newtheorem{cor}[thm]{Corollary}
\theoremstyle{definition}
\newtheorem{definition}[thm]{Definition}
\newtheorem{example}[thm]{Example}
\newtheorem{rem}[thm]{Remark}
\newcommand{\R}{\mathbb R}
\newcommand{\Z}{\mathbb Z}
\newcommand{\Q}{\mathbb Q}
\newcommand{\nn}{\vskip 0.2cm}
\newcommand{\n}{\vskip 0.1cm}
\begin{document}

\title [\ ] {Localizable invariants of combinatorial manifolds and Euler characteristic}

\author{Li Yu}
\address{Department of Mathematics and IMS, Nanjing University, Nanjing, 210093, P.R.China}
\email{yuli@nju.edu.cn}

 \keywords{Combinatorial manifold,
   localizable invariant, Euler characteristic,
     Pontryagin number, local formula}

\thanks{2010 \textit{Mathematics Subject Classification}.
 57R05, 57R20, 57Q99. \\
   This work is partially supported by
 Natural Science Foundation of China (grant no.11001120)}



\begin{abstract}
   It is shown that if a real value PL-invariant of closed combinatorial
   manifolds admits a local formula that depends
    only on the $\mathbf{f}$-vector of the link of each vertex, then
    the invariant must be a constant times the Euler characteristic.
\end{abstract}

\maketitle

  \section{Introduction}

       For an $n$-dimensional topological manifold $M^n$,
       a combinatorial manifold structure on $M^n$ is a
       triangulation $\xi$ of $M^n$ so that the link of each non-empty
       $i$-simplex in $\xi$ is a PL sphere of dimension
       $n-i-1$.  Here ``PL'' is an abbreviation for \textit{piecewise linear}.
       We call $(M^n,\xi)$ a
        \textit{combinatorial manifold}.
      Note that a simplicial complex which is topologically a
      manifold is not necessarily a combinatorial manifold.
       \nn

   \begin{definition}[Localizable PL-Invariant] \label{Def:Local-Inv}
    A real value invariant $\Psi$ of a closed
     combinatorial $n$-manifold $(M^n,\xi)$ under PL homeomorphisms is called
   \textit{localizable} if there exists a real value
    function $\psi$ on the set of simplicial isomorphism classes of PL $(n-1)$-spheres such that
    \[    \Psi(M^n,\xi) = \sum_{\text{vertex}\, v\,\in\, \xi} \psi(lk(v))     \]
      where $lk(v)$ is the link of a vertex $v$ in the triangulation $\xi$ of $M^n$. We call
       $\psi$ a \textit{local formula} for $\Psi$.
      Let $\mathcal{S}_n$ be the set of simplicial isomorphism
     classes of all PL $(n-1)$-spheres. Then $\psi$ is a
     function $\mathcal{S}_n \rightarrow \R^1$. By our definition, here a localizable invariant $\Psi$
     and its local formula $\psi$
      do not depend on the orientation of manifolds.
    \n

       In addition, if $\psi$ depends only on
       the number of simplices in each dimension in a PL $(n-1)$-sphere,
        we call $\psi$ a \textit{simple local formula} of $\Psi$. In
        this case, we can write
        \begin{equation} \label{Equ:Simple-Local-Formula}
           \Psi(M^n, \xi) = \sum_{\text{vertex}\, v\,\in\, \xi}
       \psi(f_0(lk(v)),\cdots,f_{n-1}(lk(v))),
       \end{equation}
         where $f_k(lk(v))$ is the number of $k$-simplices in
       $lk(v)$. And we call any localizable PL-invariant which admits a simple local formula
        a \textit{simple localizable PL-invariant}.
        \end{definition}
        \n

    \noindent \textbf{Warning:}
      The function $\psi : \mathcal{S}_n \rightarrow \R^1$ itself
       is not an invariant of PL $(n-1)$-spheres under PL
       homeomorphisms.
      \nn
  In the following, we do not distinguish
     a specific PL $(n-1)$-sphere $L$ and the simplicial isomorphism class in $\mathcal{S}_n$ represented by
     $L$ (the meaning should be clear from the context).
     \nn

   \begin{definition}[$\mathbf{f}$-vector]
    For any $L \in \mathcal{S}_n$, let $f_i(L)$ be the number of
    $i$-dimensional simplices in $L$. Then we call
    $$\mathbf{f}(L)=(f_0(L), \cdots, f_{n-1}(L)) \in \Z_+^n$$
    the $\mathbf{f}$\textit{-vector} of $L$.
    In addition, we define $f_{-1}(L) :=1$.
   More generally, for any triangulation $\xi$ of a closed manifold
   $M^n$, let $f_i(M^n, \xi)$ be the number of
    $i$-dimensional simplices in the triangulation and we call
    $$\mathbf{f}(M^n,\xi)=(f_0(M^n,\xi), \cdots, f_{n}(M^n,\xi)) \in \Z_+^{n+1}$$
    the $\mathbf{f}$\textit{-vector} of $(M^n,\xi)$.
    \end{definition}
    \n

      We have the following well-known fact
    on the $\mathbf{f}$-vectors of PL spheres (see~\cite{Brond83}).\nn

   \begin{thm} [Dehn-Sommerville Equations]
    For any $L\in \mathcal{S}_n$,
    \[  f_i(L) = \sum^{n-1}_{j=i} (-1)^{n-1-j} \binom{j+1}{i+1} f_j(L),
   \ -1 \leq i \leq n-1 \]
    In particular, when $i=-1$, the equation gives the Euler
    formula of $L$.
   \end{thm}

    \begin{cor}[see~\cite{Brond83}]
      For any $L\in \mathcal{S}_n$,
    $f_{[\frac{n}{2}]}(L), \cdots , f_{n-1}(L)$ are completely
      determined by $f_0(L), \cdots, f_{[\frac{n}{2}]-1}(L)$.
    \end{cor}
    \n

    \begin{example}
     The Euler characteristic
     $\chi(M^n,\xi) = \overset{n}{\underset{k=0}{\sum}}
      (-1)^k f_k(M^n,\xi)$
      is a simple localizable PL-invariant. A
       simple local formula $\psi_{\chi}$ of $\chi$ is
       given by (see~\cite[Proposition 2.1]{Gaiful05})
      \begin{equation} \label{Equ:Euler}
       \psi_{\chi}(L)
       = 1 + \sum^{n-1}_{k=0} (-1)^{k+1} \frac{f_k(L)}{k+2},\ \, \forall\, L\in
       \mathcal{S}_n.
     \end{equation}
    \end{example}
     \n

  In general, we may ask the following question.\vskip .2cm

  \noindent   \textbf{Question 1:}  Are there any simple localizable
  PL-invariants of combinatorial manifolds which are
   independent from Euler characteristic? \vskip .2cm

    In this paper, we will give a negative answer to Question 1
    by proving the following theorem. \vskip .2cm

    \begin{thm} \label{thm:main}
     Any simple localizable PL-invariant of combinatorial manifolds
      is equal to some constant times Euler characteristic.
    \end{thm}

  \begin{rem}
   It was shown in~\cite{YuLi10} that if a PL-invariant $\Psi$ of closed combinatorial
   manifolds depends only on the $\mathbf{f}$-vector of the manifold,
   then $\Psi$ must depend on the Euler
   characteristic. But our Theorem~\ref{thm:main} here does not follow
   from that result. In fact, even if we assume that two $n$-dimensional combinatorial
   manifolds $(M^n_1,\xi_1)$ and $(M^n_2,\xi_2)$
   have the same Euler characteristic and
   $\mathbf{f}$-vector, we can not guarantee that
   there exists a one-to-one correspondence between the set of links of their vertices so that
   the correspondent links in $(M^n_1,\xi_1)$ and $(M^n_2,\xi_2)$ have the same
   $\mathbf{f}$-vectors.
  \end{rem}

   The paper is organized as following. In section~\ref{Sec2}, we
   recall some basic concepts in combinatorial topology and
   discuss some properties of bistellar moves.
   In section~\ref{Sec3}, we study how a simple local formula of a
   PL-invariant changes under different type of bistellar moves. Then in
   section~\ref{Sec4}, we give a proof of
   Theorem~\ref{thm:main}. In section~\ref{Sec5}, we do some
   easy calculations to verify Theorem~\ref{thm:main} in
   dimension $4$.
   \vskip .2cm

   In addition, since Theorem~\ref{thm:main} is trivial in dimension
   $1$, we always assume the dimension $n$ of a combinatorial manifold is at least $2$
    in the rest of the paper.\\

  \section{Bistellar moves} \label{Sec2}

   We first recall some basic definitions in combinatorial topology
   (see~\cite{BP02} and~\cite{RourSand74} for more detailed exposition).\vskip
   .2cm
   \begin{definition}[Combinatorial Manifold]
     For a simplicial complex $X$, the \textit{star} $St(\sigma)$ of a
     simplex $\sigma$ in $X$ is the subcomplex consisting of all the simplices of
     $X$ that contain $\sigma$ and all their subsimplices. The \textit{link} $lk(\sigma)$ of
     $\sigma$ is the subcomplex consisting of all the simplices $\sigma'$ of
     $X$ with $\sigma'\cap \sigma = \varnothing$ and $\sigma' * \sigma$
     (the \textit{join} of
     $\sigma'$ and $\sigma$) being a simplex in $X$.
     An $n$-dimensional simplicial complex $X$ is called a
    (closed) \textit{combinatorial $n$-manifold} if the link
   of any $i$-simplex in $X$ is an PL $(n-i-1)$-sphere. In
   particular, any PL $n$-sphere is a combinatorial $n$-manifold.
   \end{definition}

     \begin{definition}[Bistellar Move] \label{Def:Bistellar-Move}
        Suppose $W$ is an $n$-dimensional \emph{pure} simplicial complex. Here ``pure'' means that all
        the maximal simplices of $W$ are of dimension $n$. Let
         $\sigma$ be an $(n-i)$-simplex in $W$ ($0\leq i \leq n$) such that its link
        in $W$ is isomorphic to the boundary $\partial\tau$ of an
        $i$-simplex $\tau$ but $\tau$ is not a simplex of $W$. Then the
        operation
        \[   T^{n,i}_{\sigma,\tau} (W) :=
        (\xi \backslash (\sigma * \partial\tau)) \cup (\partial\sigma * \tau)  \]
         is called an $n$-dimensional \textit{bistellar} $i$-\textit{move} on $W$.
         A bistellar $i$-move with $i\geq [\frac{n}{2}]$ is also
         called a \textit{reverse bistellar} $(n-i)$-\textit{move}.
         All the bistellar moves in dimension $2$ and $3$ are shown in
         Figure~\ref{p:2dim_Move} and Figure~\ref{p:3dim_Move}.
          Note that except the bistellar
         $0$-move and reverse $0$-move, all other bistellar moves do not
         change the number of vertices (i.e. $0$-dimensional simplices) in $W$.
  \end{definition}

   \begin{figure}
        \begin{equation*}
        \vcenter{
            \hbox{
                  \mbox{$\includegraphics[width=0.97\textwidth]{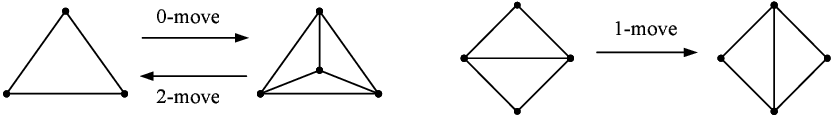}$}
                 }
           }
     \end{equation*}
   \caption{ Bistellar moves in dimension $2$ } \label{p:2dim_Move}
   \end{figure}

    \begin{figure}
        \begin{equation*}
        \vcenter{
            \hbox{
                  \mbox{$\includegraphics[width=\textwidth]{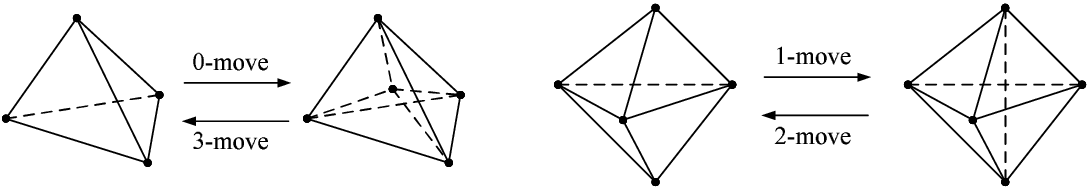}$}
                 }
           }
     \end{equation*}
   \caption{ Bistellar moves in dimension $3$ } \label{p:3dim_Move}
   \end{figure}

    Obviously, any combinatorial manifold $(M^n,\xi)$ is a pure simplicial complex.
      When we do a bistellar move in $(M^n,\xi)$,
      the link of each vertex of $\sigma$ and $\tau$
      involved in the move will be changed simultaneously.
      We have the following observation. \vskip .2cm

   \begin{lem} \label{lem:move}
     Suppose $T^{n,i}_{\sigma,\tau}$ is an $n$-dimensional bistellar $i$-move
      in a combinatorial $n$-manifold $(M^n,\xi)$.
     \begin{itemize}
       \item[(a)] For any $0 < i < n$, $T^{n,i}_{\sigma,\tau}$
          will induce an $(n-1)$-dimensional bistellar $i$-move on the link of
        each vertex of $\sigma$ and induce an $(n-1)$-dimensional bistellar $(i-1)$-move
        on the link of each vertex of $\tau$. \n

        \item[(b)] For $i =0$, $T^{n,0}_{\sigma,\tau}$
        will induce an $(n-1)$-dimensional bistellar $0$-move on the link of
        each vertex of $\sigma$.\n

        \item[(c)] For $i =n$, $T^{n,n}_{\sigma,\tau}$
        will induce an $(n-1)$-dimensional bistellar $(n-1)$-move on the link of
        each vertex of $\tau$.
     \end{itemize}
   \end{lem}
   \begin{proof}
     For each vertex $v_0$ of a $(n-i)$-simplex $\sigma$,
     let $\sigma \backslash \{ v_0 \}$ denote the codimension $1$ face of $\sigma$ that does not contain
     $v_0$. Then the change of $lk(v_0)$ under the bistellar $i$-move $T^{n,i}_{\sigma,\tau}$
     is:
       $$\sigma \backslash \{ v_0 \} * \partial \tau \longrightarrow
     \partial(\sigma \backslash \{ v_0 \}) * \tau, $$
     which by our notation is an $(n-1)$-dimensional
     bistellar $i$-move $T^{n-1,i}_{\sigma \backslash \{ v_0 \},\tau}$.
     Similarly, for any $u_0\in \tau$, the change of $lk(u_0)$ under
     $T^{n,i}_{\sigma,\tau}$ is
      $$\sigma * \partial (\tau \backslash \{ u_0 \}) \longrightarrow
       \partial \sigma * (\tau \backslash \{ u_0 \}),$$
       which is an
       $(n-1)$-dimensional bistellar $(i-1)$-move $T^{n-1,i-1}_{\sigma, \tau \backslash \{ u_0
       \}}$.
   \end{proof} \vskip .2cm

   The relation between bistellar moves and PL-homeomorphisms
    of combinatorial manifolds is shown by the
   following famous theorem of Pachner.\nn

   \begin{thm} [Pachner~\cite{Pa86}] \label{thm:Pachner}
     Two closed combinatorial $n$-manifolds are PL-homeomorphic if
     and only if it is possible to move between their
     triangulations using a sequence of bistellar moves
    and simplicial isomorphisms.
  \end{thm}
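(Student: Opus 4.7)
The plan begins with the easy direction, which is to verify that bistellar moves and simplicial isomorphisms preserve PL-homeomorphism type. A bistellar $i$-move on a closed combinatorial $n$-manifold removes a subcomplex of the form $A \ast \partial B$ and glues in $\partial A \ast B$, where $A \ast B = \partial \Delta^{n+1}$. Since both subcomplexes have the same underlying space (a PL $n$-ball embedded in the manifold), the move merely re-triangulates that ball and leaves the ambient PL structure unchanged. I would verify this directly and conclude that any composition of bistellar moves and simplicial isomorphisms produces a manifold PL-homeomorphic to the original.

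For the nontrivial direction, I would invoke a classical theorem of Alexander: two finite simplicial complexes are PL-homeomorphic if and only if they can be connected by a finite sequence of stellar subdivisions and their inverses (welds). Given PL-homeomorphic closed combinatorial $n$-manifolds $M$ and $M'$, it then suffices to show that, within the class of closed combinatorial $n$-manifolds, every stellar subdivision at a simplex can be replaced by a finite sequence of bistellar moves; the same for welds follows by symmetry, since the inverse of a bistellar $i$-move is a bistellar $(n-i)$-move.

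The crux is to realize a stellar subdivision at an $i$-simplex $\sigma$ as a sequence of bistellar moves, which I would prove by induction on the codimension $n-i$. When $i = n$ the stellar subdivision at a top-dimensional simplex is literally a bistellar $0$-move, giving the base case. For smaller $i$, the link $L = \mathrm{lk}(\sigma, M)$ is a PL $(n-i-1)$-sphere and hence itself a closed combinatorial manifold. By the inductive hypothesis I can transform $L$ into the standard sphere $\partial \Delta^{n-i}$ using bistellar moves on $L$; each such move lifts to a bistellar move on $M$ by joining with $\sigma$, thanks to the identity $\sigma \ast (A \ast \partial B) = (\sigma \ast A) \ast \partial B$. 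Once the link of $\sigma$ is standard, the closed star of $\sigma$ equals $\sigma \ast \partial \Delta^{n-i}$, and the stellar subdivision at $\sigma$ in this configuration is a single bistellar $(n-i)$-move.

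The main obstacle I anticipate is making the induction genuinely rigorous. One must exhibit a well-founded ordering, for instance on pairs (codimension, $f$-vector of the link) with lexicographic order, in order to guarantee termination; verify that bistellar moves performed inside the closed star of $\sigma$ do not collide with the rest of $M$; and confirm carefully that joining a bistellar move on $L$ with the simplex $\sigma$ yields a legitimate bistellar move on $M$. The last point uses both the join identity above and the fact that $\sigma$ is genuinely a simplex rather than merely a PL ball. Once these bookkeeping issues are handled, combining this reduction with Alexander's theorem establishes the theorem.
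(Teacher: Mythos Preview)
The paper does not prove Theorem~\ref{thm:Pachner}; it is quoted as Pachner's 1986 result and used as a black box throughout. So there is no in-paper proof to compare your proposal against.

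That said, your outline follows the classical route (Alexander's theorem on stellar moves, then reducing stellar moves to bistellar moves by induction), which is essentially Pachner's own strategy. Two points deserve tightening. First, your induction is really a double induction: to simplify $\mathrm{lk}(\sigma)$ to $\partial\Delta^{n-i}$ by bistellar moves you are invoking Pachner's theorem in dimension $n-i-1$, not merely the codimension hypothesis in dimension $n$; you should state the outer induction on $n$ explicitly. Second, and more seriously, the claim that once $\mathrm{lk}(\sigma)=\partial\Delta^{n-i}$ the stellar subdivision at $\sigma$ is ``a single bistellar $(n-i)$-move'' is false. A bistellar $(n-i)$-move replaces $\sigma*\partial\tau$ by $\partial\sigma*\tau$, whereas stellar subdivision replaces $\sigma*\partial\tau$ by $v*\partial\sigma*\partial\tau$ for a new vertex $v$; already for $\dim\sigma=n-1$ these have different numbers of top simplices. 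What is true is that the stellar subdivision can be realized by a short \emph{sequence} of bistellar moves (for instance, a $0$-move introducing $v$ inside one top simplex of $\mathrm{St}(\sigma)$, followed by moves that push $v$ across the remaining top simplices), and this is where the real work in Pachner's argument lies. Your proposal would be complete once this step is carried out carefully.
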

   \n

  For any $L\in \mathcal{S}_n$ and
   any $(n-1)$-dimensional bistellar $i$-move on $L$, let $\beta^i\mathbf{f}(L)$
     be the $\mathbf{f}$-vector of $L$ after the move. It is easy to
     see that:
      \begin{equation} \label{Equ:Change}
        \beta^i \mathbf{f}(L) = (f_0(L) + r_{0,i}, \cdots, f_{n-1}(L) + r_{n-1,i}),
        \end{equation}
       $$\mathrm{where}\ \, r_{k,i} = \binom{n-i}{k-i}-\binom{i+1}{n-k}, \ \,
        0 \leq k, i \leq n-1.$$

     $$ \text{Here we define}\ \binom{k}{j} =0 \ \, \text{if $k< j$. It is easy to check
     that:} \qquad\qquad\qquad $$
      \begin{equation} \label{Equ:r1}
          r_{k, n-1-i} = - r_{k,i},\ \, 0 \leq  k, i \leq n-1
      \end{equation}
       \begin{equation} \label{Equ:r2}
       \text{if}\ \, 2i= n-1,\ r_{k,i} =0, \  \, 0 \leq  k\leq n-1
        \end{equation}
        \n

      By~\eqref{Equ:r1}, the reverse bistellar $i$-move on $L$ gives
      \[    \beta^{n-1-i} \mathbf{f}(L) = (f_0(L) - r_{0,i},
      \cdots, f_{n-1}(L) - r_{n-1,i}).          \]

    Suppose $\Psi$ is a localizable PL-invariant of combinatorial
    $n$-manifolds which admits a simple local formula $\psi$.
    Since $\psi$ is simple, by definition we can think of
    $\psi$ as a real value function
    $\mathcal{A}_n \rightarrow \R$ where
    $$\mathcal{A}_n := \{ \mathbf{f}(L) \in \Z_{+}^n \, ; L \in
    \mathcal{S}_n  \}. $$
    So in the following, we also write $\psi(L) = \psi(\mathbf{f}(L))$
     for any $L\in  \mathcal{S}_n$.\nn

    Note that any bistellar move transforms a combinatorial manifold
    to a combinatorial manifold that is PL-homeomorphic to the initial
    one. So $\Psi$ is invariant under all bistellar moves. So for any bistellar
     $i$-move $T^{n,i}_{\sigma,\tau}$ on a combinatorial $n$-manifold,
      the function $\psi$ must satisfy the following equations
      according to Lemma~\ref{lem:move}.\nn

    \begin{itemize}
      \item  When $i\neq 0$ or $n$, we have
      \[
       \sum_{v\in \sigma}\psi(\beta^i\mathbf{f}(lk(v))) +
       \sum_{v'\in \tau}\psi(\beta^{i-1}\mathbf{f}(lk(v'))) =
       \sum_{v\in \sigma}\psi(\mathbf{f}(lk(v))) +
       \sum_{v'\in \tau}\psi(\mathbf{f}(lk(v')))
    \]
    \end{itemize}
   Then we put all the terms corresponding to the vertices of $\sigma$ and
     $\tau$ together and get
  \begin{equation} \label{Equ:i-move}
     \sum_{v\in \sigma} \psi(\beta^i\mathbf{f}(lk(v))) -
     \psi(\mathbf{f}(lk(v))) +
       \sum_{v'\in \tau} \psi(\beta^{i-1}\mathbf{f}(lk(v')))
       - \psi(\mathbf{f}(lk(v'))) = 0
       \end{equation}

    \begin{itemize}
      \item    When $i=0$, we have
      \begin{equation} \label{Equ:0-move}
        \sum_{v\in \sigma} \psi(\beta^0\mathbf{f}(lk(v))) -
     \psi(\mathbf{f}(lk(v)))   + \psi(\mathbf{f}(\partial\Delta^n)) = 0
       \end{equation}
       \item   When $i=n$, we have
     \begin{equation} \label{Equ:n-move}
        -\psi(\mathbf{f}(\partial\Delta^n)) + \sum_{v'\in \tau} \psi(\beta^{n-1}\mathbf{f}(lk(v'))) -
     \psi(\mathbf{f}(lk(v'))) = 0
       \end{equation}
    \end{itemize}
    \noindent
    $$ \text{ where $\mathbf{f}(\partial\Delta^n)=
      \left(\binom{n+1}{1},  \binom{n+1}{2}, \cdots,  \binom{n+1}{n} \right)$
   is the $\textbf{f}$-vector}$$
   of the boundary of the $n$-dimensional simplex $\Delta^n$.
    \nn

     \begin{rem} \label{Rem:Universal_0_1}
     For a given triangulation of a closed manifold $M^n$ and
      an arbitrarily chosen vertex $v$ in it,
     $v$ may not be directly involved in any bistellar $i$-move when $ 2 \leq i \leq n$.
     This is because there may not be any $(n-i)$-simplex $\sigma$ in $St(v)$ whose link
     satisfies the condition of a bistellar $i$-move (see Definition~\ref{Def:Bistellar-Move}).\\
     \end{rem}

 \section{How the value of a simple local formula changes under bistellar moves} \label{Sec3}

      In this section, we first introduce a special type of
       PL $n$-disk in each dimension $n\geq 2$, and then use it to
       show how the value of a simple local formula of a localizable
       invariant changes under bistellar moves. \vskip .2cm

     \begin{lem} \label{Lem:Aux-Cell}
      For each $n\geq 2$,
      there exists a PL $n$-disk $K^n$ and a vertex $v_0 \in \partial K^n$
       such that:
        \begin{enumerate}
            \item[(a)] $\partial K^n$ is isomorphic to the boundary of an
                   $n$-simplex.
            \item[(b)] for any $0\leq i \leq n-1$, there exists a bistellar
            $i$-move $T^{n,i}_{\sigma,\tau}$ in the interior of $K^n$ with
            $v_0\in \sigma \subset K^n$
           so that $T^{n,i}_{\sigma,\tau}$
            does not cause any changes to the star of any vertex on $\partial
            K^n$ except $v_0$.
        \end{enumerate}
    \end{lem}
    \begin{proof}
           For each $0\leq i \leq n-1$, let $\Delta^i$ be a simplex of
        dimension $i$.
         Let $J_i = \Delta^{n-i} * \partial\Delta^i $ and choose
         a vertex $b_0^i$ of $\Delta^{n-i}$ in $J_i$.
         Let $J$ be the one-point union of
         $J_0, \cdots, J_{n-1}$ got by gluing each $b_0^i$ to a point
         $b_0$. On the other hand, let $\widetilde{\Delta}^n_1, \widetilde{\Delta}^n_2$ be
         two $n$-simplices such that $\widetilde{\Delta}^n_2\subset \widetilde{\Delta}^n_1$
         and $\widetilde{\Delta}^n_2 \cap \partial \widetilde{\Delta}^n_1$ is a vertex $v_0$ of both.
         Next, we glue $b_0$ to $v_0$ and put $J$ inside $\widetilde{\Delta}^n_2$
          such that $J \cap \partial \widetilde{\Delta}^n_2 = v_0$.
          By introducing some new simplices
           in $\widetilde{\Delta}^n_1 - J$,
          we can subdivide $\widetilde{\Delta}^n_1$ into a PL $n$-disk such that
          the triangulation of $\partial \widetilde{\Delta}^n_1$
           is not changed. We denote this PL $n$-disk by $K^n$.
           So $\partial K^n$ is isomorphic to the boundary of an
          $n$-simplex (see Figure~\ref{p:Magic_Cell} for a
          construction of $K^2$).\n

           The canonical bistellar
          $i$-move $T^{n,i}_{\sigma,\tau}$ in $K^n$ is just replacing $J_i$
           by $\partial \Delta^{n-i} * \Delta^i$. It is easy to see
           that $v_0\in \sigma$ and $T^{n,i}_{\sigma,\tau}$ will not change the star of
          any vertex on $\partial K^n$ except $v_0$.
          So such a $K^n$ satisfies all our requirements.
    \end{proof} \vskip .2cm

   \begin{figure}
        \begin{equation*}
        \vcenter{
            \hbox{
                  \mbox{$\includegraphics[width=0.87\textwidth]{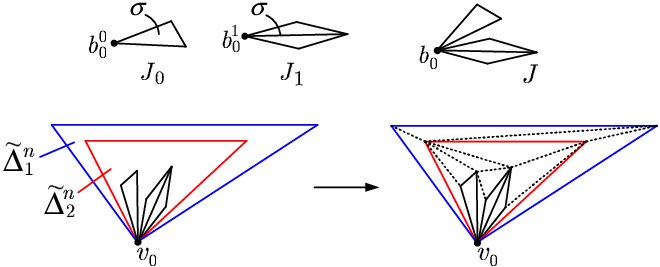}$}
                 }
           }
      \end{equation*}
     \caption{ An auxiliary cell in dimension $2$ } \label{p:Magic_Cell}
   \end{figure}

        Obviously, the kind of $K^n$ that satisfies the conditions in Lemma~\ref{Lem:Aux-Cell}
        is not unique. So in the rest of this paper,
        we fix one such $K^n$ for each dimension $n$.
        We call $K^n$
         the \textit{auxiliary $n$-cell} and $v_0$ the
        \textit{base point} of $K^n$. In addition,
         the canonical $n$-dimensional bistellar $i$-move
        associated to $J_i$ in $K^n$ is denoted by $T^{n,i}(K^n)$.\nn

         Let $a_{n,i}$ be the number of $i$-simplices in the link of $v_0$
        in $K^n$, and let
         $\mathbf{a_n} := (a_{n,0}, \cdots , a_{n,n-1}) - (\binom{n}{1},\cdots, \binom{n}{n}) \in \Z^n_+$. Then define
          $$\mathcal{A}'_n := \{  \mathbf{f}(L) +  \mathbf{a_n} \in
    \Z_{+}^n \, ; L \in \mathcal{S}_n \}.$$
    \nn

    \begin{lem}
       $\mathcal{A}'_n \subset \mathcal{A}_n \subset \Z^n_{+}$.
    \end{lem}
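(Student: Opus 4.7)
The second inclusion $\mathcal{A}_n \subseteq \mathbb{Z}^n_+$ is immediate, since the entries of any $\mathbf{f}$-vector of a simplicial complex are non-negative integers.

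For the first inclusion, the plan is to show that for every $L \in \mathcal{S}_n$ there exists a PL $(n-1)$-sphere $L' \in \mathcal{S}_n$ with $\mathbf{f}(L') = \mathbf{f}(L) + \mathbf{a_n}$. The construction will use the gadget $n$-cell $K^n$ from Lemma~\ref{Lem:gadgetCell} to locally enlarge $L$ by exactly the prescribed number of simplices in each dimension. The key feature of $K^n$ is that $\partial K^n \cong \partial\Delta^n$ while its interior, as reflected in the link of the base point $v_0$, carries an ``excess'' of simplices measured precisely by $\mathbf{a_n}$.

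Concretely, I first observe that $D := lk_{K^n}(v_0)$ is a PL $(n-1)$-disk whose boundary $\partial D = lk_{\partial K^n}(v_0)$ is isomorphic to $\partial\Delta^{n-1}$, since $\partial K^n \cong \partial\Delta^n$ and $v_0$ is a vertex of this boundary sphere. Given $L$, I pick any top-dimensional simplex $\sigma$ of $L$, whose boundary $\partial\sigma$ is also isomorphic to $\partial\Delta^{n-1}$. Identifying $\partial\sigma$ with $\partial D$, I define
\[
  L' \;:=\; (L \setminus \sigma) \cup_{\partial\sigma \,=\, \partial D} D.
\]
Since $L \setminus \sigma$ is a PL $(n-1)$-disk with boundary $\partial\sigma$ and $D$ is another PL $(n-1)$-disk with the same boundary, their union $L'$ along $\partial\sigma$ is a PL $(n-1)$-sphere, hence an element of $\mathcal{S}_n$.

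The final step is the $\mathbf{f}$-vector identity $\mathbf{f}(L') = \mathbf{f}(L) + \mathbf{a_n}$, verified by a direct count dimension by dimension: the only simplices lost are those in the interior of $\sigma$, and the only simplices gained are the interior simplices of $D$ inside $K^n$, which are counted exactly by $\mathbf{a_n}$ by definition. The main technical point is the careful bookkeeping of boundary-versus-interior simplices on both sides of the gluing, which is the only step requiring real care; no deeper obstacle is anticipated, since the existence of $K^n$ with the required properties has already been established in Lemma~\ref{Lem:gadgetCell}.
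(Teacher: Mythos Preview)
Your argument is essentially the paper's own proof, recast one dimension down: the paper forms the cone $U=v_0\ast L$, subdivides one $n$-simplex of $U$ into the gadget cell $K^n$ with $v_0$ as base point, and reads off the new link of $v_0$; what you call $L'=(L\setminus\sigma)\cup_{\partial\sigma}D$ is exactly that new link, and your $\mathbf{f}$-vector count is the same bookkeeping. So the approach and the content are the same, only the packaging differs.
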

    \begin{proof}
        For any $L \in \mathcal{S}_n$, choose a vertex $u_0$ and
        let $U = u_0 * L$ be a PL $n$-ball whose boundary is $L$ (see Figure~\ref{p:Induced-Move}).
        So the link of $u_0$ in $U$ is isomorphic to $L$.
        Next, we choose an arbitrary $n$-simplex
        in $U$ and subdivide it into an auxiliary $n$-cell $K^n$ so that
        $u_0$ is the base point.
        Then the link of $u_0$ in $U$ becomes a new PL
        $(n-1)$-sphere $L'$
         whose $\mathbf{f}$-vector differs from $\mathbf{f}(L)$ by $\mathbf{a_n}$.
         Hence $\mathbf{f}(L) +  \mathbf{a_n} = \mathbf{f}(L')\in \mathcal{A}_n$.
    \end{proof}

    \begin{figure}
        \begin{equation*}
        \vcenter{
            \hbox{
                  \mbox{$\includegraphics[width=0.49\textwidth]{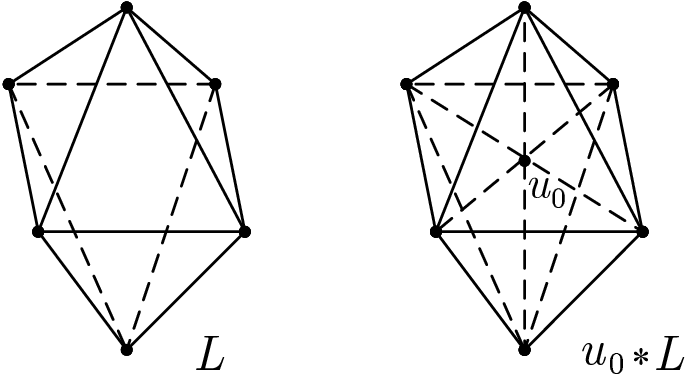}$}
                 }
           }
      \end{equation*}
     \caption{ } \label{p:Induced-Move}
   \end{figure}

    \n

   \begin{lem}\label{Lem:const_1}
   If $\psi$ is simple local formula of a localizable
    invariant of combinatorial $n$-manifolds, then
    for any $0\leq i \leq n-1$ and any $\mathbf{f'} \in \mathcal{A}'_n$,
    $\psi(\beta^i\mathbf{f'}) - \psi(\mathbf{f'})$ is independent on
    $ \mathbf{f'}$.
    \end{lem}
   \begin{proof}
        For an element $\mathbf{f}' \in \mathcal{A}'_n$,
        let $L \in \mathcal{S}_n$ with
        $\mathbf{f}(L) + \mathbf{a_n} = \mathbf{f'}$.
        Suppose $v$ is a vertex in a combinatorial manifold $(M^n,\xi)$
        such that $lk(v) \cong L$. We choose an $n$-simplex
        in $St(v)$ and subdivide it into an auxiliary $n$-cell $K^n$ so that
        $v$ is the base point. Then after the subdivision, we have $\mathbf{f}(lk(v))=\mathbf{f}(L) +
        \mathbf{a_n}=\mathbf{f'}$.\n

        For any $0\leq i \leq n-1$,
        We do the canonical bistellar $i$-move $T^{n,i}(K^n)$
        in the auxiliary cell $K^n$. Let $u^i_1,\cdots, u^i_{n+1}$ be all the
         vertices involved in $T^{n,i}(K^n)$ other than $v$.
         By the construction of $K^n$ (see Lemma~\ref{Lem:Aux-Cell}),
         for each $ 1\leq j \leq n+1$, the
        star of $u^i_j$ is contained in the interior of $K^n$. So the change of $St(u^i_j)$
        under $T^{n,i}(K^n)$ is canonically determined by $K^n$.
         So if we write down the Equation~\eqref{Equ:i-move}
         or~\eqref{Equ:0-move} for $T^{n,i}(K^n)$,
         all the terms in the equation are
         canonically determined by $K^n$ except
         $\psi(\beta^i\mathbf{f}(lk(v))) - \psi(\mathbf{f}(lk(v)))$.
         So $\psi(\beta^i\mathbf{f'}) - \psi(\mathbf{f'})$ is determined only by $K^n$,
         but independent on the value of $\mathbf{f'}$.
   \end{proof} \vskip .2cm

    In the rest of this section, we fix a
    simple localizable PL-invariant $\Psi$ of combinatorial
    $n$-manifolds and let $\psi$ be a simple local formula of $\Psi$.
    By Lemma~\ref{Lem:const_1},
    for any $ \mathbf{f'} \in \mathcal{A}'_n$, let
     $$\psi(\beta^i\mathbf{f'}) - \psi(\mathbf{f'}) := H^n_i(\psi) \,\in \R^1,\ 0\leq i \leq n-1$$
     where $H^n_i(\psi)$ is independent on $\mathbf{f}'$.
     In addition, we define
     $$H^n_{-1}(\psi) := \psi(\mathbf{f}(\partial\Delta^n)).\qquad\qquad $$
        The following lemma tells us some relations between different
        $H^n_i(\psi)$'s.   \vskip .2cm

   \begin{lem} \label{Equ:d_i}
     For any $0\leq i \leq n-1$, we have:
     \begin{enumerate}
       \item[(a)] each $H^n_i(\psi)$ is a rational multiple of
       $\psi(\mathbf{f}(\partial\Delta^n))$.\n
       \item[(b)] $(n-i+1)\cdot H^n_i(\psi) + (i+1)\cdot H^n_{i-1}(\psi) =0 $.\n
       \item[(c)] $H^n_i(\psi) = -H^n_{n-i-1}(\psi)$.\n
       \item[(d)] when $n$ is odd, $H^n_i(\psi)=0$ for any $-1\leq i \leq n-1$.
     \end{enumerate}
   \end{lem}
   \begin{proof}
       Suppose we have a bistellar $i$-move $T^{n,i}_{\sigma,\tau}$ in a
       combinatorial $n$-manifold $(M^n,\xi)$.
       For each vertex $v$ of $\sigma$,
        we choose an $n$-simplex
        $\Delta^n_v$ in $St(v)$ that does not contain $\sigma$,
        and then subdivide $\Delta^n_v$ into an auxiliary cell $K^n$ so
        that $v$ is the base point
        (sharing the same auxiliary cell between different stars are allowed).
       Then the vector $\mathbf{f}(lk(v))$ becomes an element of $\mathcal{A}_n'$
       after the subdivision. Similarly, for each vertex $v'$ of $\tau$,
       we choose an $n$-simplex $\Delta^n_{v'}$ in $St(v')$ that does not contain $\sigma$
       and do the same subdivision (see Figure~\ref{p:Subdivision}).
         \begin{figure}
        \begin{equation*}
        \vcenter{
            \hbox{
                  \mbox{$\includegraphics[width=0.92\textwidth]{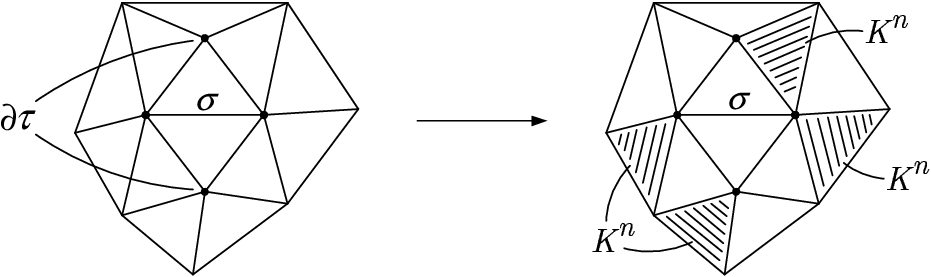}$}
                 }
           }
      \end{equation*}
     \caption{ } \label{p:Subdivision}
   \end{figure}
    \n

        Let $\xi'$ be the new triangulation of $M^n$ after these
        subdivisions in $\xi$. Notice that the
        bistellar $i$-move $T^{n,i}_{\sigma,\tau}$ can still be carried out in
        $(M^n,\xi')$.
         Then by Lemma~\ref{Lem:const_1}, we obtain (b) from
       Equation~\eqref{Equ:i-move} and
       ~\eqref{Equ:0-move} for the bistellar $i$-move $T^{n,i}_{\sigma,\tau}$ in $(M^n,\xi')$.
      Moreover,
       we can easily write each $H^n_i(\psi)$ ($0\leq i \leq n-1$) as a rational multiple of
      $H^n_{-1}(\psi)=\psi(\mathbf{f}(\partial\Delta^n))$ by using (b) recursively.
      In particular, we get
        $$H^n_i(\psi) = (-1)^{n-2i-1} H^n_{n-i-1}(\psi).$$
       Then (c) is true when $n$ is even.
      When $n$ is odd,
      we notice that
      $$\beta^{\frac{n-1}{2}} \mathbf{f} = \mathbf{f},\ \, \forall\,\mathbf{f} \in
      \mathcal{A}_n.$$
      So
       $H^n_{\frac{n-1}{2}}(\psi) =0$, which implies that $\psi(\mathbf{f}(\partial\Delta^n))=0$
       hence $H^n_i (\psi)=0$ for all $0\leq i\leq n-1$. So
       (c) still holds when $n$ is odd. The lemma is proved.
   \end{proof}

   \n

   \begin{lem} \label{Lem:const}
      Suppose $\Psi$ is a localizable
   PL-invariant of combinatorial $n$-manifolds which admits a simple local formula $\psi$.
    Then for any PL $(n-1)$-sphere $L\in \mathcal{S}_n$ and any $(n-1)$-dimensional bistellar
    $i$-move $T^{n-1,i}_{\sigma,\tau}$ on $L$, we have
    $$\psi(\mathbf{f}(T^{n-1,i}_{\sigma,\tau}(L))) - \psi(\mathbf{f}(L)) = H^n_i(\psi),
    \    0\leq  i \leq n-1.$$
   \end{lem}
 \begin{proof}
  Let $S^0 = \{ u_0, u_1 \}$ be a pair of disjoint points. Then $S^0 * L$ is a PL
  $n$-sphere called the \emph{suspension} of $L$. Obviously,
  the links of $u_0$ and $u_1$ in $S^0 * L$ are both $L$.
  In addition, let $\widetilde{\sigma} = u_0 * \sigma$. Then
  the $n$-dimensional bistellar $i$-move $T^{n,i}_{\widetilde{\sigma},\tau}$ in
  $S^0 * L$ will induce the bistellar move $T^{n-1,i}_{\sigma,\tau}$ on $L$ (see Lemma~\ref{lem:move} (a)).\n

    Next, we subdivide each $n$-simplex in $St(u_1)$
    into the auxiliary cell $K^n$ and
     let $\widetilde{L}$ denote the new PL $n$-sphere obtained after these subdivisions
    on $S^0 * L$.
     Note that $St(u_0)$ stays unchanged under these subdivisions (because of the construction
    of $K^n$). Now for any vertex $v$ of $\sigma$ or $\tau$ and
    the link $lk(v)$ of $v$ in $\widetilde{L}$,
    we have $\mathbf{f}(lk(v)) \in \mathcal{A}_n'$.
   \n

  We can still do the bistellar $i$-move $T^{n,i}_{\widetilde{\sigma},\tau}$ in $\widetilde{L}$.
    Then from~\eqref{Equ:i-move},~\eqref{Equ:0-move} and Lemma~\ref{Lem:const_1}, we get:
    \[
   \psi(\beta^i\mathbf{f}(lk(u_0))) - \psi(\mathbf{f}(lk(u_0)))
      + (n-i) H^n_i(\psi) + (i+1)H^n_{i-1}(\psi) = 0.
       \]
    Since $lk(u_0) =L$ and $\beta^i\mathbf{f}(lk(u_0)) =
       \mathbf{f}(T^{n-1,i}_{\sigma,\tau}(L))$, so we have
        \[
  \psi(\mathbf{f}(T^{n-1,i}_{\sigma,\tau}(L))) - \psi(\mathbf{f}(L))
      + (n-i) H^n_i(\psi) + (i+1)H^n_{i-1}(\psi) = 0.
       \]
    Moreover, Lemma~\ref{Equ:d_i} (b) tells us that
    $$ (n-i) H^n_i(\psi) + (i+1)H^n_{i-1}(\psi) = -H^n_i (\psi).$$
     So $\psi(\mathbf{f}(T^{n-1,i}_{\sigma,\tau}(L))) - \psi(\mathbf{f}(L))
      = H^n_i (\psi)$.
 \end{proof}\n

        Lemma~\ref{Lem:const} implies that
        the change of a simple
        local formula $\psi$ on a PL sphere caused by a bistellar $i$-move does not depend on
        where the bistellar $i$-move takes place in the PL sphere.
        This is a very strong restriction on a PL-invariant that could
         admit a simple local formula. This somehow explains why the
          conclusion in Theorem~\ref{thm:main} could be true.  \\

  \section{Proof of Theorem~\ref{thm:main}} \label{Sec4}

     Suppose $L$ is an arbitrary PL $(n-1)$-sphere.
      By Theorem~\ref{thm:Pachner}, we can
      use a finite sequence of $(n-1)$-dimensional
      bistellar moves to turn $\partial \Delta^n$
      into $L$. For each $0\leq i \leq n-1$, suppose there are
      $m_i(L)$ bistellar $i$-moves in the sequence.
      Then by~\eqref{Equ:Change}, $m_0(L),\cdots, m_{n-1}(L)$ satisfy
       \begin{equation} \label{Equ:steps}
          \sum^{n-1}_{i=0} m_i(L) \cdot r_{k,i} = f_k(L) - f_k(\partial\Delta^n),\ \,
           0 \leq k \leq n-1.
     \end{equation}
     By Equation~\eqref{Equ:r1} and ~\eqref{Equ:r2}, the linear system~\eqref{Equ:steps} is equivalent to:
        \begin{equation} \label{Equ:System1}
             \overset{[\frac{n}{2}]-1}{\underset{i=0}{\sum}} \left(
           m_i(L) - m_{n-1-i}(L) \right) \cdot r_{k,i} = f_k(L) - f_k(\partial\Delta^n),
           \ \, 0 \leq k \leq n-1
        \end{equation}
        \n

        Since there exist  many different ways to turn
        $\partial \Delta^n$ to $L$ via bistellar moves, so $m_i(L)$ is not
        canonically determined by $L$. But the following lemma
        (which reformulates a result in~\cite{Pa86}) tells us that the
        difference $m_i(L) - m_{n-1-i}(L)$ is actually uniquely
       determined by $L$.\nn

       \begin{lem}\label{Lem:Unique}
        For any $L \in \mathcal{S}_n$ and $0\leq i \leq [\frac{n}{2}]-1$,
         $m_i(L) - m_{n-1-i}(L)$ in~\eqref{Equ:System1} is
         uniquely determined by $f_0(L),\cdots f_{n-1}(L)$.
       \end{lem}
       \begin{proof}
        Let us only consider the first
         $[\frac{n}{2}]$ equations in the system~\eqref{Equ:System1}:
             \begin{equation} \label{Equ:System2}
                \overset{[\frac{n}{2}]-1}{\underset{i=0}{\sum}} \left(
           m_i(L) - m_{n-1-i}(L) \right) \cdot r_{k,i} = f_k(L) - f_k(\partial \Delta^n),
           \ \, 0 \leq k \leq \left[\frac{n}{2} \right]-1.
            \end{equation}

          Notice when $0\leq i\leq [\frac{n}{2}]-1$, $0\leq k\leq
     [\frac{n}{2}]-1$, $r_{k,i}=\binom{n-i}{k-i}$. So
        \begin{itemize}
         \item if $k< i$, $r_{k,i} =0$.\n
         \item if $k=i$, $r_{i,i} = 1$.
        \end{itemize}
       So the square integral matrix $(r_{k,i})_{0\leq k,i\leq [\frac{n}{2}]-1 }$
       is invertible over $\Z$. So the linear system~\eqref{Equ:System2}
       has a unique solution.
       \end{proof}

        \begin{rem}
        When $n=2s+1$ is odd, from Equation~\eqref{Equ:r2},
        $r_{k,s} =0$ for any $0\leq k \leq n-1$. So
         in~\eqref{Equ:System1},
         the term $m_s\cdot r_{k,s}$ is omitted.
       \end{rem} \vskip .2cm

      By the proof of Lemma~\ref{Lem:Unique},
      for any $L\in \mathcal{S}_n$ and $0\leq i\leq [\frac{n}{2}]-1$,
      \begin{equation} \label{Equ:Solution}
       \quad m_i(L) - m_{n-1-i}(L) =
      \sum^{[\frac{n}{2}]-1}_{k=0} c_{ik} \left( f_k(L) -  f_k(\partial\Delta^n) \right) ,
       \end{equation}
       where $\{ c_{ik} \in \Z \}_{0\leq i,k\leq [\frac{n}{2}]-1}$ are some universal constants.
      \\

 \noindent \textbf{Proof of Theorem~\ref{thm:main}:}
 Suppose $\Psi$ is a simple localizable PL-invariant of
       combinatorial $n$-manifolds and $\psi$ is a simple local formula of $\Psi$.
       By our discussion above, for any $L \in
         \mathcal{S}_n$, we have
      \begin{align} \label{Equ:psi_formula}
       \psi(\mathbf{f}(L)) & = \psi(\mathbf{f}(\partial\Delta^n)) +
       \sum^{n-1}_{i=0} m_i(L)\cdot H^n_i(\psi)\quad \text{(by Lemma~\ref{Lem:const})}
        \notag \\
        & = \psi(\mathbf{f}(\partial\Delta^n)) + \sum^{[\frac{n}{2}]-1}_{i=0}
         (m_i(L)-m_{n-1-i}(L)) \cdot H^n_i(\psi)  \\
        & \overset{\eqref{Equ:Solution}}{=}  \psi(\mathbf{f}(\partial\Delta^n)) +
         \sum^{[\frac{n}{2}]-1}_{i=0} \sum^{[\frac{n}{2}]-1}_{k=0}
         H^n_i(\psi)\cdot c_{ik}  \left( f_k(L) -  f_k(\partial\Delta^n)
         \right).
         \notag \ \
        \end{align}
      So $ \psi(\mathbf{f}(L))$ is a linear function of $f_0(L),\cdots,
      f_{n-1}(L)$. Moreover, since
      each $H^n_i(\psi)$ is a rational multiple of $\psi(\mathbf{f}(\partial\Delta^n))$, so
       we can write
      \[   \psi(\mathbf{f}(L)) = \psi(\mathbf{f}(\partial\Delta^n))\cdot \sum^{[\frac{n}{2}]-1}_{k=-1} q_k\cdot f_k(L),
         \;\  q_k \in \Q \quad (\text{recall}\ f_{-1}(L):=1). \]

       Then for a combinatorial $n$-manifold $(M^n,\xi)$, we have
       \begin{align} \label{equ:Psi}
            \Psi(M^n,\xi) & = \sum_{ v\,\in\, \xi} \psi(\mathbf{f}(lk(v))) \notag \\
            &= \psi(\mathbf{f}(\partial\Delta^n))\cdot \sum_{v\,\in\, \xi}  \sum^{[\frac{n}{2}]-1}_{k=-1} q_k \cdot
            f_k(lk(v))  \notag \\
            &= \psi(\mathbf{f}(\partial\Delta^n))\cdot \sum^{[\frac{n}{2}]-1}_{k=-1} q_k \cdot \left(\sum_{v\,\in\, \xi}
            f_k(lk(v)) \right).
       \end{align}

       Let $f_k(M^n,\xi)$ be the number of $k$-simplices in $\xi$. Then obviously
        \[   f_k(M^n,\xi)
        = \frac{1}{k+1} \sum_{v\,\in\, \xi} f_{k-1}(lk(v)), \ 0\leq k \leq n \]
        \[
             \Longrightarrow\ \,  \Psi(M^n,\xi) =
            \psi(\mathbf{f}(\partial\Delta^n))\cdot
        \sum^{[\frac{n}{2}]-1}_{k=-1} q_k (k+2)\cdot
        f_{k+1}(M^n,\xi).
       \]
         So $\Psi(M^n,\xi)$ is a linear function of $f_0(M^n,\xi), \cdots,
         f_{n}(M^n,\xi)$. Then by the Theorem~\ref{thm:Justin} below,
         $\Psi$ must be a constant times the Euler characteristic.
    \hfill $\square$
      \nn

      \begin{thm} [Roberts~\cite{Justin02}] \label{thm:Justin}
   Any linear combination
    of the numbers of simplices which is an invariant of closed combinatorial manifolds
    under PL homeomorphism must be proportional to the Euler
    characteristic.\\
    \end{thm}

   \section{Verification of Theorem~\ref{thm:main} in dimension $4$}
   \label{Sec5}

      When $n=4$, by the Dehn-Sommerville equations,
     the $\mathbf{f}$-vector of a PL $3$-sphere $L$ depends only on
      the number of vertices and edges in $L$.
      So if $\psi$ is a simple local formula of a PL-invariant $\Psi$,
      we can write
      \[ \psi(L) = \psi(\mathbf{f}(L))=
      \psi(f_0(L), f_1(L)),\ \, \forall\, L \in \mathcal{S}_4 . \]
     In this case, the linear system~\eqref{Equ:System2} reads:
    \begin{align*}
       m_0(L) - m_3(L)  &= f_0(L) -5, \\
       4(m_0(L)-m_3(L)) + (m_1(L)-m_2(L)) &= f_1(L) -10. \qquad \ \
    \end{align*}
   So $m_0(L) - m_3(L) = f_0(L) -5$, $m_1(L)-m_2(L) = f_1(L) - 4f_0(L) +10$.
   In addition, by Lemma~\ref{Equ:d_i}, we have
   $$ H^4_0(\psi)= -\frac{1}{5} \psi(\mathbf{f}(\partial\Delta^4)) ;  \quad
    H^4_1(\psi) = \frac{1}{10} \psi(\mathbf{f}(\partial\Delta^4)). $$
    Then by
   Equation~\eqref{Equ:psi_formula}, we have
        \begin{align*}
         \psi(L) &= \psi(\mathbf{f}(\partial\Delta^4))
         \cdot \left( 1 -\frac{1}{5}(f_0(L) -5) +
          \frac{1}{10} (f_1(L) - 4f_0(L) +10) \right)  \\
           &=  3 \psi(\mathbf{f}(\partial\Delta^4))\cdot \left( 1 -\frac{1}{5} f_0(L)
          + \frac{1}{30} f_1(L)    \right).
         \end{align*}
   On the other hand, a local formula $\psi_{\chi}$
    for the Euler characteristic $\chi$ of
    a $4$-dimensional
     combinatorial manifold is (see~\eqref{Equ:Euler}):
      \[     \psi_{\chi}(L) = 1 - \frac{f_0(L)}{2} + \frac{f_1(L)}{3} -
      \frac{f_2(L)}{4} + \frac{f_3(L)}{5}, \ \, \forall\, L \in \mathcal{S}_4.
                              \]
     The Dehn-Sommerville equations for the PL $3$-spheres imply that
         \[    f_2(L)=2f_3(L), \quad f_3(L)=f_1(L) -
         f_0(L).
           \]
     \[    \Longrightarrow \ \,
         \psi_{\chi}(L) = 1 - \frac{f_0(L)}{5} +
         \frac{f_1(L)}{30}. \qquad\qquad
     \]
     So we have $\psi (L) = 3\psi(\mathbf{f}(\partial\Delta^4)) \cdot \psi_{\chi} (L)$.
     Then for any $4$-dimensional combinatorial manifold
     $(M^4,\xi)$, we have
     $$\Psi(M^4,\xi)  = 3 \psi(\mathbf{f}(\partial\Delta^4)) \cdot \chi(M^4,\xi).$$
   This verifies Theorem~\ref{thm:main} in dimension $4$.

    \ \\

\end{document}